
 \documentclass[11pt]{amsart}


\usepackage{amssymb}
\usepackage{bbm}
\usepackage{anysize}
\usepackage{graphicx}
\marginsize{1in}{1in}{1in}{1in}

\usepackage{pdfsync}
\usepackage{color}
\usepackage{comment}
\newtheorem{tm}{Theorem}[section]

\newtheorem{exmp}[tm]{Example}
\newtheorem{lem}[tm]{Lemma}
\newtheorem{assumption}[tm]{Assumption}
\newtheorem{rk}[tm]{Remark}

\numberwithin{equation}{section}





\pagestyle{plain}


\begin{document}

\title{Time-changed Stochastic Control Problem and its Maximum Principle Theory}


\author{ERKAN NANE}
\address{ERKAN NANE: Department of Mathematics and Statistics,
Auburn University,
Auburn, AL 36849 USA}
\email{ezn0001@auburn.edu}

\author{YINAN NI}
\address{YINAN NI: Department of Mathematics and Statistics,
Auburn University,
Auburn, AL 36849 USA}
\email{yzn0005@auburn.edu}

\begin{abstract}
This paper studies a time-changed stochastic control problem, where the underlying stochastic process is a L\'evy noise time-changed by an inverse subordinator. We establish a maximum principle theory for the time-changed stochastic control problem. We also prove the existence and uniqueness of the corresponding time-changed backward stochastic differential equation involved in the stochastic control problem. Some examples are provided for illustration.
\end{abstract}

\maketitle

\section{Introduction}
Uncertainty is inherent in the real world and changes over time, putting people's decisions at risk. A decision maker wants to select the best choice among all possible ones. The stochastic control theory serves as a tool to such dynamic optimization problem. The world has witnessed many applications of stochastic control theory in various fields such as biology \cite{bio}, economics \cite{econ}, and finance \cite{fin}.

A well known approach to stochastic control problem is based on the maximum principle method. Such method for It\^o diffusion case is first studied by Kushner \cite{kush}, Bismut \cite{bism} and further developed by Bensoussan \cite{bens}, Peng \cite{peng}, and others. The jump diffusion case is formulated by Framstad, {\O}ksendal and Sulem \cite{fos}. The idea of the maximum principle approach is to formulate a Hamiltonian function and derive the adjoint equations, which involve the backward stochastic differential equation. Under sufficient conditions, the optimal control is the solution of a coupled system of forward and backward stochastic differential equations.

The time-changed stochastic differential equation and its related fractional Fokker-Plank equation have become an indispensable tool in applied scientific areas. An example of time-changed stochastic differential equation is $dX(t)=dB(E_t)$ where $X(0)=0$ and $\{E_t,t\geq 0\}$ is the inverse of an $\alpha-stable$ subordinator, see \cite{meer}. The sub-diffusion $B(E_t)$ is governed by time-fractional diffusion equation $\partial_t^\alpha q(x,t) =\partial^2_x q(x,t)$. Some time-changed stochastic differential equations are used to describe real world phenomena. For example, quantitative financial analysts exploit the Black-Scholes framework in derivative pricing, in which the stock price is modeled by Brownian motion. However, some stocks are not actively traded thus their prices stay constant for some time periods. Such phenomenon can be modeled by time-changed Brownian motion but not by the standard Brownian motion, see Figure \ref{constantgraph}. Fruitful studies in this area are available, see \cite{jawy, marc2, erni3, erni}.

\begin{figure}\label{constantgraph}
\begin{center}
\caption{Log price of the Kalev stock \cite{jawy}}
\includegraphics[scale=.66]{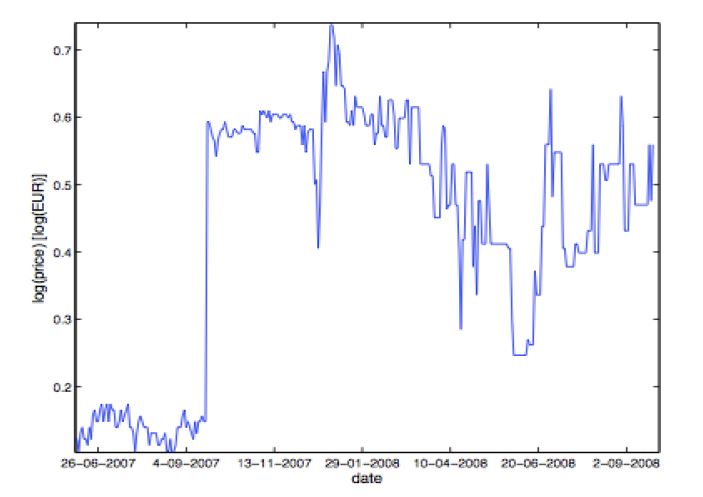}
\end{center}
\end{figure}

As time-changed stochastic processes have been adopted in more and more areas, we believe it is necessary to study the stochastic control problem based on the time-changed stochastic process, which will build up a framework to solve potential optimization problems.  In this paper, we investigate the time-changed stochastic control problem using the maximum principle method. Specifically, we consider the following time-changed stochastic process, see \cite{keib1, erni2}:
\begin{equation}\label{simSDE}
\begin{aligned}
dX(t)&=b(t, E_t, X(t-), u(t))dE_t+\sigma(t, E_t, X(t-), u(t))dB_{E_t}\\
&+\int_{|y|<c}\gamma(t, E_t, X(t-), u(t),y)\tilde{N}(dE_t,dy),
\end{aligned}
\end{equation}
with $X(0)=x_0\neq 0$ and the corresponding performance function
\begin{equation}
J(u)=\mathbb{E} \Big[\int_0^Tg(t, E_t, X(t), u(t))dE_t+h(X(T))\Big], \ u\in \mathcal{A} ,
\end{equation}
where $u(t)=u(t,w)\in U \subset \mathbb{R}$ is the control and $\mathcal{A}$ denotes the set of $admissible$ controls. We establish a maximum principle theory for the stochastic control problem to find $u^*\in \mathcal{A}$ such that
\begin{equation}
J(u^*)=\sup_{u\in \mathcal{A}}J(u).
\end{equation}
Then we extend such result to a more general time-changed stochastic process involving time drift term $dt$:
\begin{equation}\label{SDE}
\begin{aligned}
dX(t)&=\mu(t, E_t, X(t-), u(t))dt+b(t, E_t, X(t-), u(t))dE_t+\sigma(t, E_t, X(t-), u(t))dB_{E_t}\\
&+\int_{|y|<c}\gamma(t, E_t, X(t-), u(t),y)\tilde{N}(dE_t,dy),
\end{aligned}
\end{equation}
with $X(0)=x_0\neq 0$,
and the corresponding performance function
\begin{equation}
J(u)=\mathbb{E}  \Big[\int_0^Tf(t, E_t, X(t), u(t))dt+\int_0^Tg(t, E_t, X(t), u(t))dE_t+h(X(T))\Big], \ u\in \mathcal{A} .
\end{equation}

In the remaining parts of this paper, some necessary concepts and preliminary results will be given in Section 2. In section 3 and 4, we establish  a maximum principle theory for time-changed stochastic control problems mentioned above and provide some examples for illustration.

\section{Preliminaries}

Let $(\Omega, \mathcal{F},(\mathcal{F}_t),P)$ be a filtered probability space satisfying usual hypotheses of completeness and right continuity. Assume that an independent $\mathcal{F}_t$-adapted Poisson random measure $N$ is defined on $\mathbb{R}_+\times (\mathbb{R}-\{0\})$ with compensator $\tilde{N}$ and intensity measure $\nu$, where $\nu$ is a L\'evy measure such that $\tilde{N}(dt,dy)=N(dt,dy)-\nu(dy)dt$ and $\int_{\mathbb{R}-\{0\}}(|y|^2 \land 1)\nu(dy)<\infty$.

Let $\{D(t),t\geq 0\}$ be a right continuous with left limits (RCLL) subordinator starting from 0 with Laplace transform
\begin{equation}
\mathbbm{E}e^{-\lambda D(t)}=e^{-t\phi(\lambda)},
\end{equation}
where Laplace exponent $\phi(\lambda)=\int_0^\infty(1-e^{-\lambda x})\nu(dx)$
, define its inverse
\begin{equation}\label{invstable}
E_t:=\inf\{ \tau>0: D(\tau)>t\}.
\end{equation}

\begin{lem} (Lemma 8 in \cite{keib1})
Let E be the inverse of a subordinator D with Laplace exponent $\phi$ and infinite L\'evy measure. Then $\mathbb{E}[e^{\lambda E_t}]<\infty$, $\forall \lambda \in \mathbb{R}$ and $t\geq 0$. In particular, for each $t>0$, moments of $E_t$ of all orders exist and are given by
\begin{equation}
\mathbb{E}[E^n_t]=\mathcal{L}_s^{-1}\Big[\frac{n!}{s\phi^n(s)}\Big](t),  n\in \mathbb{N},
\end{equation}
where $\mathcal{L}_s^{-1}[g(s)]$ denotes the inverse Laplace transform of a function $g(s)$.
\end{lem}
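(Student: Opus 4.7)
\medskip
\noindent\textbf{Proof proposal.} The plan is to exploit the fundamental duality
\[
\{E_t > \tau\} = \{D(\tau) \leq t\} \quad (\text{up to a } P\text{-null set}),
\]
which holds because the assumption that $\nu$ is an infinite L\'evy measure forces $D$ to be strictly increasing. This duality converts questions about $E_t$ into questions about the subordinator $D$, for which $\phi$ gives explicit negative-exponential moments $\mathbb{E}[e^{-\mu D(\tau)}] = e^{-\tau\phi(\mu)}$.

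For the finiteness of $\mathbb{E}[e^{\lambda E_t}]$, the case $\lambda \leq 0$ is immediate since $E_t \geq 0$. For $\lambda > 0$, the tail formula gives
\[
\mathbb{E}[e^{\lambda E_t}] = 1 + \lambda \int_0^\infty e^{\lambda \tau} P(E_t > \tau)\, d\tau,
\]
and the duality combined with a Chernoff bound yields, for any $\mu > 0$,
\[
P(E_t > \tau) = P(D(\tau) \leq t) \leq e^{\mu t}\, \mathbb{E}[e^{-\mu D(\tau)}] = e^{\mu t - \tau\phi(\mu)}.
\]
Since $\nu$ has infinite total mass, monotone convergence applied to $\phi(\mu) = \int_0^\infty (1 - e^{-\mu x}) \nu(dx)$ forces $\phi(\mu) \to \infty$ as $\mu \to \infty$; choosing $\mu$ with $\phi(\mu) > \lambda$ then makes the integral converge.

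For the moment formula, the layer-cake identity gives
\[
\mathbb{E}[E_t^n] = \int_0^\infty n\tau^{n-1} P(E_t > \tau)\, d\tau = \int_0^\infty n\tau^{n-1} P(D(\tau) \leq t)\, d\tau.
\]
Taking the Laplace transform in $t$ and exchanging the order of integration (justified by the exponential tail bound above), the inner integral evaluates to
\[
\int_0^\infty e^{-st} P(D(\tau) \leq t)\, dt = \frac{\mathbb{E}[e^{-sD(\tau)}]}{s} = \frac{e^{-\tau\phi(s)}}{s},
\]
so
\[
\int_0^\infty e^{-st} \mathbb{E}[E_t^n]\, dt = \frac{n}{s} \int_0^\infty \tau^{n-1} e^{-\tau\phi(s)}\, d\tau = \frac{n!}{s\,\phi(s)^n},
\]
and inverting the Laplace transform yields the claimed identity.

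The main obstacle is not any single calculation but the careful verification of the almost-sure duality for a general (possibly jump-discontinuous) subordinator, together with the rigorous application of Fubini in the Laplace-transform step. Both rely essentially on the infinite L\'evy measure hypothesis, which simultaneously guarantees strict monotonicity of $D$ and the unboundedness of $\phi$ that makes the Chernoff bound usable.
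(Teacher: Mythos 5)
The paper does not prove this lemma at all: it is imported verbatim as Lemma~8 of the cited reference (Jum--Kobayashi), so there is no in-paper argument to compare against. Your proof is correct and is essentially the standard argument behind that cited result: the inverse-process duality $\{E_t>\tau\}=\{D(\tau)\le t\}$, a Chernoff bound with $\phi(\mu)\to\infty$ (which is exactly where the infinite L\'evy measure enters) for exponential integrability, and a Laplace transform in $t$ of the layer-cake representation for the moment formula. One small refinement: what makes the duality an a.s.\ equality (rather than merely $\{E_t>\tau\}\subseteq\{D(\tau)\le t\}\subseteq\{E_t\ge\tau\}$) is not strict monotonicity of $D$ per se but the atomlessness of the law of $D(\tau)$, i.e.\ $P(D(\tau)=t)=0$ for each $\tau>0$, which again follows from the infinite L\'evy measure; for the Chernoff estimate only the always-true inclusion $\{E_t>\tau\}\subseteq\{D(\tau)\le t\}$ is needed, and for the final step one should note that $t\mapsto\mathbb{E}[E_t^n]$ is continuous so the inverse Laplace transform is unambiguous.
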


Consider the following time-changed stochastic differential equation:
\begin{equation}\label{simSDE}
\begin{aligned}
dX(t)=&b(t, E_t, X(t-), u(t))dE_t+\sigma(t, E_t, X(t-), u(t))dB_{E_t}\\
&+\int_{|y|<c}\gamma(t, E_t, X(t-), u(t),y)\tilde{N}(dE_t,dy),
\end{aligned}
\end{equation}
with $X(0)=x_0\neq 0$,
where $b,\sigma,\gamma$ are real-valued functions satisfying the following Lipschitz condition \ref{lip} and assumption \ref{tec}
such that there exists a unique $\mathcal{G}_t$-adapted process $X(t)$ satisfying time-changed SDE \eqref{simSDE}, see Lemma 4.1 in \cite{keib}. The filtration $\{\mathcal{G}_t\}_{t\geq 0}$ is defined as
\begin{equation}
\mathcal{G}_t = \bigcap_{u>t}\{[\mathcal{F}_y: 0\leq y\leq u ]\vee \sigma[E_y: y\geq 0] \}.
\end{equation}

\begin{assumption}\label{lip}
(Lipschitz condition)
There exists a positive constant K such that
\begin{equation}
\begin{aligned}
&\Big|b(t_1,t_2,x,u)-b(t_1,t_2,y,u)\Big|^2+\Big|\sigma(t_1,t_2,x,u)-\sigma(t_1,t_2,y,u)\Big|^2\\
&+\int_{|z|<c}\Big|\gamma(t_1,t_2,x,u,z)-\gamma(t_1,t_2,y,u,z)\Big|^2\nu(dz)\leq K|x-y|^2,
\end{aligned}
\end{equation}
for all $t_1,t_2\in \mathbb{R}_+$ and $x,y\in \mathbb{R}$.
\end{assumption}

\begin{assumption}\label{tec}
If $X(t)$ is a RCLL and $\mathcal{G}_t$-adapted process, then
\begin{equation}
b(t, E_t, X(t), u(t)), \sigma(t, E_t, X(t), u(t)),\gamma(t, E_t, X(t), u(t),y)\in \mathcal{L}(\mathcal{G}_t),
\end{equation}
where  $\mathcal{L}(\mathcal{G}_t)$ denotes the class of left continuous with right limits (LCRL) and $\mathcal{G}_t$-adapted processes.
\end{assumption}

The process $u(t)=u(t,w)\in U \subset \mathbb{R}$ is the control. Assume that $u$ is adapted and RCLL, and that the corresponding equation \eqref{simSDE} has a unique strong solution $X^{(u)}(t), t\in [0,T]$. Such controls are called $admissible$. The set of admissible controls is denoted by $\mathcal{A}$.

\begin{lem}(It\^o Formula for Time-Changed L\'evy Noise, Lemma 3.1 in \cite{erni2})\label{itofor}
Let $D(t)$ be a RCLL subordinator and $E_t$ its inverse process as \eqref{invstable}.  Let $X$ be a process defined as following:
\begin{equation}
\begin{aligned}\label{sdelevy}
X(t)=&x_0+\int_0^t\mu(t, E_t, X(t-))dt+\int_0^tb(t, E_t, X(t-))dE_t+\int_0^t\sigma(t, E_t, X(t-))dB_{E_t}\\
&+\int_0^t\int_{|y|<c}\gamma(t, E_t, X(t-),y)\tilde{N}(dE_t,dy),
\end{aligned}
\end{equation}
where $\mu, b, \sigma, \gamma$ are measurable functions such that all integrals are defined. Here $c$ is the maximum allowable jump size.\\
Then, for all $F : \mathbb{R}_+\times \mathbb{R}_+\times \mathbb{R}\rightarrow \mathbb{R}$ in $C^{1,1,2}(\mathbb{R}_+\times \mathbb{R}_+\times \mathbb{R},\mathbb{R})$, with probability one,
\begin{equation}
\begin{aligned}\label{itolevy}
F(t, E_t, &X(t))-F(0,0,x_0)=\int_0^t L_1F(s, E_s, X(s-))ds+\int_0^t L_2F(s, E_s, X(s-))dE_s\\
&+\int_0^t\int_{|y|<c}\Big[F(s, E_s, X(s-)+\gamma(s, E_s, X(s-),y))-F(s, E_s, X(s-))\Big]\tilde{N}(dE_s,dy)\\
&+\int_0^t F_x(s, E_s, X(s-))\sigma(s, E_s, X(s-))dB_{E_s},
\end{aligned}
\end{equation}
where
\begin{equation}
\begin{aligned}\label{linearop}
L_1F(t_1,&t_2,x)=F_{t_1}(t_1,t_2,x)+F_{x}(t_1,t_2,x)\mu(t_1,t_2,x),\\
L_2F(t_1,&t_2,x)=F_{t_2}(t_1,t_2,x)+F_{x}(t_1,t_2,x)b(t_1,t_2,x)+\frac{1}{2}F_{xx}(t_1,t_2,x)\sigma^2(t_1,t_2,x)\\
+&\int_{|y|<c}\Big[F(t_1,t_2,x+\gamma(t_1,t_2,x,y))-F(t_1,t_2,x)-F_x(t_1,t_2,x)\gamma(t_1,t_2,x,y)\Big]\nu(dy).
\end{aligned}
\end{equation}
\end{lem}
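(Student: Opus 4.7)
The plan is to reduce the claim to the classical It\^o formula for semimartingales with jumps on the filtration $\{\mathcal{G}_t\}$, then sort the resulting drift terms into a $dt$ piece and a $dE_t$ piece to produce the operators $L_1$ and $L_2$. The three integrators $dE_t$, $dB_{E_t}$, and $\tilde N(dE_t,dy)$ appearing in \eqref{sdelevy} are each $\mathcal{G}_t$-semimartingales, and once their characteristics are identified, the identity \eqref{itolevy} is essentially bookkeeping in the standard formula.

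The first step is to record the characteristics. Because $D$ is a subordinator, $E_t$ is continuous and non-decreasing, so $\int_0^t b\, dE_s$ is a finite-variation drift-like term with no quadratic variation contribution. Second, $B_{E_t}$ is a continuous $\mathcal{G}_t$-martingale with quadratic variation $[B_{E_\cdot}]_t = E_t$, by the continuity of $E$ and optional sampling. Third, the compensated integral $M_t = \int_0^t\int_{|y|<c}\gamma\,\tilde N(dE_s,dy)$ is a purely discontinuous $\mathcal{G}_t$-martingale whose $\mathcal{G}_t$-compensator is $\nu(dy)\,dE_s$; this is the content of the time-change theorem for Poisson random measures against the continuous increasing process $E$.

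With these facts in hand, I would apply the semimartingale It\^o formula to $F(t, E_t, X(t))$. Its absolutely continuous drift part collects $F_{t_1}\,dt$ together with $F_x\mu\,dt$, producing $L_1F\,dt$. The $dE_t$-drift pieces combine $F_{t_2}\,dE_t$ with $F_x b\,dE_t$; the quadratic-variation correction supplies $\tfrac12 F_{xx}\sigma^2\,dE_t$; and the compensator correction for the jump integral supplies $\int_{|y|<c}\bigl[F(\cdot+\gamma)-F-F_x\gamma\bigr]\nu(dy)\,dE_s$. These last three merge into $L_2F\,dE_t$. The continuous local-martingale part is $F_x\sigma\,dB_{E_s}$, and the purely discontinuous martingale part, written in compensated form, gives the remaining $\tilde N(dE_s,dy)$ integral in \eqref{itolevy}.

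The main obstacle is not the bookkeeping but the rigorous justification of the two time-change identities underlying the whole computation: $[B_{E_\cdot}]_t = E_t$ and the identification of $\nu(dy)\,dE_s$ as the $\mathcal{G}_t$-compensator of the jump measure $N(dE_t,dy)$. These are delicate because $E$ is not adapted to the natural filtration of $(B,N)$, and one needs the enlargement $\mathcal{G}_t$ together with a time-change theorem of Kobayashi type to ensure that $B_{E_\cdot}$ and the re-timed Poisson measure remain $\mathcal{G}_t$-semimartingales with the stated characteristics. Once these are established, applying the standard It\^o formula for RCLL semimartingales and collecting terms yields \eqref{itolevy}.
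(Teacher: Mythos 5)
The paper itself gives no proof of this lemma---it is quoted verbatim from Lemma 3.1 of \cite{erni2}---and your outline follows the same route used there: show via Kobayashi-type time-change theorems that $B_{E_t}$ is a $\mathcal{G}_t$-martingale with $[B_{E_\cdot}]_t=E_t$ and that $\nu(dy)\,dE_t$ compensates the re-timed jump measure, then apply the classical semimartingale It\^o formula and sort the drift into the $dt$ and $dE_t$ pieces giving $L_1$ and $L_2$. The one point worth flagging is that the continuity of $E_t$, which you invoke to kill the cross-variation terms, requires $D$ to be strictly increasing (infinite L\'evy measure or positive drift), consistent with the standing assumptions elsewhere in the paper.
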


\begin{lem}\label{exunbsde}
(Existence and Uniqueness of BSDE)

Consider the following time-changed Backward stochastic differential equation

\begin{equation}\label{BSDE}
dX(t)=-\mu(t,E_t, X(t-),u(t))dE_t+u(t)dB_{E_t}+\int_{\mathbb{R} \setminus \{0\}}h(t,z)\tilde{N}(dE_t,dz),
\end{equation}
with $X(T)=X$, where $\mu\in L^2(\mathbb{R}_+,\mathbb{R}_+,\mathbb{R},\mathbb{R}), h\in L^2(\mathbb{R}_+,\mathbb{R})$. If there exists a positive constant $L_\mu>0$ such that $|\mu(t_1,t_2,x_1,u_1)-\mu(t_1,t_2,x_2,u_2)|\leq L_\mu\Big( |x_1-x_2|+|u_1-u_2| \Big)$,
then there exists a unique solution $(X(t),u(t))$ of \eqref{BSDE}.
\end{lem}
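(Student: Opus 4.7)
The plan is to establish existence and uniqueness by a Picard/Banach fixed-point argument tailored to the time-changed setting. First, I would introduce the Hilbert space $\mathcal{H}$ of triples $(X,u,h)$ of RCLL, $\mathcal{G}_t$-adapted processes equipped with the weighted norm
$$\|(X,u,h)\|_\beta^2 = \mathbb{E}\Big[|X(T)|^2 + \int_0^T e^{\beta E_t}\Big(|X(t)|^2 + |u(t)|^2 + \int_{\mathbb{R}\setminus\{0\}}|h(t,z)|^2\nu(dz)\Big)dE_t\Big]$$
for a parameter $\beta>0$ to be chosen later. Using the measure $dE_t$ (rather than $dt$) is essential because the martingale integrals in \eqref{BSDE} are driven by time-changed noises whose quadratic variation lives on the random time scale $E_t$.

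Next, given inputs $(Y,V)\in\mathcal{H}$, I would form the square-integrable $\mathcal{G}_t$-martingale
$$M(t) = \mathbb{E}\Big[X + \int_0^T \mu(s,E_s,Y(s-),V(s))\,dE_s \,\Big|\, \mathcal{G}_t\Big],$$
apply a martingale representation result for time-changed Lévy noises (as used in the references leading to Lemma \ref{itofor}) to produce $\mathcal{G}_t$-adapted $(u,h)$ with $M(t)=M(0)+\int_0^t u\,dB_{E_s}+\int_0^t\int h(s,z)\tilde N(dE_s,dz)$, and then set $X(t)=M(t)-\int_0^t\mu(s,E_s,Y(s-),V(s))\,dE_s$. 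This defines a map $\Phi\colon (Y,V)\mapsto(X,u,h)$ whose fixed points are exactly solutions of \eqref{BSDE}.

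To verify the contraction, I would take two inputs $(Y^i,V^i)$ with outputs $(X^i,u^i,h^i)$ and apply the time-changed Itô formula of Lemma \ref{itofor} to $e^{\beta E_t}|X^1(t)-X^2(t)|^2$ on $[t,T]$. The Lipschitz hypothesis on $\mu$, together with standard inequalities and the Itô isometry for integrals against $dB_{E_t}$ and $\tilde N(dE_t,dz)$, should yield an estimate of the form
$$\|\Phi(Y^1,V^1)-\Phi(Y^2,V^2)\|_\beta^2 \leq \frac{C L_\mu^2}{\beta}\,\|(Y^1,V^1)-(Y^2,V^2)\|_\beta^2,$$
so that $\Phi$ becomes a strict contraction once $\beta$ is taken large. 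Banach's fixed point theorem then delivers the unique solution.

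The main obstacle I anticipate is the random time change itself: because the inverse subordinator $E_t$ is constant on intervals of positive Lebesgue measure, one cannot directly import the classical Pardoux--Peng or Tang--Li estimates based on $dt$-weights. The weighted norm $e^{\beta E_t}$ is designed precisely to absorb the cross terms produced by the Itô expansion, since differentiating $e^{\beta E_t}$ contributes a strictly negative term $-\beta e^{\beta E_t}|X^1-X^2|^2\,dE_t$ that dominates the $dE_t$-valued Lipschitz bound from $\mu$. An auxiliary technical point is ensuring that the martingale representation applies in the filtration $\mathcal{G}_t$ generated by the underlying Lévy noise together with the inverse subordinator; this conditional independence structure should reduce to the $\mathcal{F}_t$-martingale representation along the random time scale $E_t$.
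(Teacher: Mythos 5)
Your proposal is correct in outline and lands in the same family of arguments as the paper, but it packages the key estimate differently. The paper proves uniqueness by applying the time-changed It\^o formula to $|X_1(t)-X_2(t)|^2$ and then invoking a time-changed Gronwall lemma with the weight $\exp(kE_t)$, and proves existence by an explicit Picard iteration $(X_n,u_n)$ whose increments are shown to decay like $(e^{kE_T})^n F_1(0)/n!$, giving a Cauchy sequence in $L^2(\Omega\times\mathbb{R}_+)$. You instead run the Pardoux--Peng-style contraction on a $\beta$-weighted space with norm built from $e^{\beta E_t}\,dE_t$; the weight $e^{\beta E_t}$ plays exactly the role of the paper's $\exp(kE_t)$ Gronwall factor, and integrating against $dE_t$ rather than $dt$ is the same essential adaptation to the time change. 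What your route buys is a cleaner, quantified contraction constant $CL_\mu^2/\beta$ and, importantly, an explicit acknowledgment of the martingale representation step in the filtration $\mathcal{G}_t$ needed to construct the iterates --- a step the paper's recursion uses implicitly but never justifies (its displayed recursion defines $X_n$ from $u_{n-1}$ and never says where $u_n$ comes from). What the paper's route buys is that it avoids committing to a specific Banach space and handles the $u$-component directly through the inequality \eqref{exist1}. One discrepancy to flag: you treat the jump coefficient $h$ as part of the unknown triple $(X,u,h)$, whereas Lemma \ref{exunbsde} prescribes $h$ as given data and asks only for the pair $(X(t),u(t))$; in the paper's iteration $h$ is held fixed so the jump terms cancel in all difference estimates, while in your construction the martingale representation would generically return a jump component different from the prescribed $h$. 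This is arguably a defect of the lemma's formulation rather than of your argument, but as written your map $\Phi$ solves the standard jump-BSDE with unknown $(X,u,h)$ rather than the equation literally stated.
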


\begin{proof}
To prove the uniqueness, suppose $(X_1(t),u_1(t))$ and $(X_2(t),u_2(t))$ are two solutions to \eqref{BSDE} in $L^2(\Omega\times\mathbb{R}_+)\times L^2(\Omega\times\mathbb{R}_+)$. By It\^o formula,
\begin{equation}\label{uniu}
\begin{aligned}
\Big|&X_1(T)-X_2(T) \Big|^2-\Big|X_1(t)-X_2(t) \Big|^2=\int_t^T|u_1(s)-u_2(s)|^2dE_s\\
&+\int_t^T2(X_1(s)-X_2(s))\Big[-\Big(\mu(s,E_s,X_1(s),u_1(s))-\mu(s,E_s,X_2(s),u_2(s))\Big)dE_s+\big(u_1(s)-u_2(s)\big)dB_{E_s}\Big]\\
\end{aligned}
\end{equation}

Thus,
\begin{equation}
\begin{aligned}
\Big|&X_1(t)-X_2(t) \Big|^2+\int_t^T|u_1(s)-u_2(s)|^2dE_s+\int_t^T2(X_1(s)-X_2(s)) \big(u_1(s)-u_2(s)\big)dB_{E_s}\\
=&\int_t^T2(X_1(s)-X_2(s))\Big(\mu(s,E_s,X_1(s),u_1(s))-\mu(s,E_s,X_2(s),u_2(s))\Big)dE_s\\
\leq & \int_t^T 2L_\mu |X_1(s)-X_2(s)|\Big( |X_1(s)-X_2(s)|+|u_1-u_2| \Big)dE_s \\
\leq & \int_t^T 2L_\mu \Big[ |X_1(s)-X_2(s)|^2+ \frac{L_\mu}{2}|X_1(s)-X_2(s)|^2+\frac{1}{2L_\mu}|u_1(s)-u_2(s)|^2 \Big]dE_s\\
=&(2L_\mu+L_\mu^2) \int_t^T|X_1(s)-X_2(s)|^2dE_s+\int_t^T|u_1(s)-u_2(s)|^2 dE_s.
\end{aligned}
\end{equation}

Take expectations on both sides,
\begin{equation} \label{inequ}
\mathbb{E}\Big[\Big|X_1(t)-X_2(t) \Big|^2\Big]\leq (2L_\mu+L_\mu^2)\mathbb{E} \Big[\int_t^T|X_1(s)-X_2(s)|^2dE_s\Big].
\end{equation}

Note that we apply Martingale property to derive inequality \eqref{inequ} and lay some details below.
\begin{equation}
\begin{aligned}
\int_t^T(X_1(s)-X_2(s)) &\big(u_1(s)-u_2(s)\big)dB_{E_s}=\int_0^\infty 1{\{ t \leq s\leq T \}}(X_1(s)-X_2(s)) \big(u_1(s)-u_2(s)\big)dB_{E_s}\\
=&\int_0^\infty 1_{\{t \leq D(s-)\leq T \}}(X_1(D(s-))-X_2(D(s-))) \big(u_1(D(s-))-u_2(D(s-))\big)dB_s,
\end{aligned}
\end{equation}
since $(X_1(t),u_1(t))$ and $(X_2(t),u_2(t))$ are in $L^2(\Omega\times\mathbb{R_+})$,
\begin{equation}
\begin{aligned}
&\mathbb{E}\int_0^\infty\Big| 1_{\{t \leq D(s-)\leq T\}}(X_1(D(s-))-X_2(D(s-))) \big(u_1(D(s-))-u_2(D(s-))\big)\Big|^2ds\\
& \leq  \mathbb{E}\int_0^\infty\Big|(X_1(D(s-))-X_2(D(s-))) \big(u_1(D(s-))-u_2(D(s-))\big)\Big|^2ds<\infty,
\end{aligned}
\end{equation}
we have
\begin{equation}
\begin{aligned}
&\mathbb{E}\int_t^T(X_1(s)-X_2(s)) \big(u_1(s)-u_2(s)\big)dB_{E_s}\\
&=\mathbb{E}\int_0^\infty 1_{\{t \leq D(s-)\leq T \}}(X_1(D(s-))-X_2(D(s-))) \big(u_1(D(s-))-u_2(D(s-))\big)dB_s\\
&=0.
\end{aligned}
\end{equation}

Next we apply time-changed Gronwall's method by Lemma 3.1 in \cite{wu}. Define $F(t)=\int_t^T|X_1(s)-X_2(s)|^2dE_s$, then $F(T)=0$ and
\begin{equation}
\begin{aligned}
-d\Big(F(t)\exp(kE_t)\Big)&=-\exp(kE_t)dF(t)-k\exp(kE_t)F(t)dE_t\\
&=\exp(kE_t)\Big(\Big|X_1(t)-X_2(t)\Big|^2-k\int_t^T\Big|X_1(s)-X_2(s)\Big|^2dE_s\Big)dE_t,
\end{aligned}
\end{equation}
thus
\begin{equation}
-F(T)\exp(kE_T)+F(t)\exp(kE_t)=\int_t^T\Big[\exp(kE_s)\Big(\Big|X_1(s)-X_2(s)\Big|^2-k\int_s^T\Big|X_1(u)-X_2(u)\Big|^2dE_u\Big)\Big]dE_s.
\end{equation}

Taking expectations and letting $k=2L_\mu+L_\mu^2$ imply that
\begin{equation}
\begin{aligned}
&\mathbb{E}\Big[F(t)\exp(kE_t)\Big]=\mathbb{E}\Big[\int_t^T\exp(kE_s)\Big(\Big|X_1(s)-X_2(s)\Big|^2-k\int_s^T\Big|X_1(u)-X_2(u)\Big|^2dE_u\Big)dE_s\Big]\\
&=\mathbb{E}\Big[\mathbb{E}\Big[\int_t^T\exp(kE_s)\Big(\Big|X_1(s)-X_2(s)\Big|^2-k\int_s^T\Big|X_1(u)-X_2(u)\Big|^2dE_u\Big)dE_s\Big]\Big|\sigma\{E_s,s\in(t,T)\}\Big]\\
&=\mathbb{E}\Big[\int_t^T\exp(kE_s)\mathbb{E}\Big(\Big|X_1(s)-X_2(s)\Big|^2-k\int_s^T\Big|X_1(u)-X_2(u)\Big|^2dE_u\Big)dE_s\Big|\sigma\{E_s,s\in(t,T)\}\Big]\\
&\leq 0
\end{aligned}
\end{equation}

It follows that
\begin{equation}
\mathbb{E} \Big[F(t)\Big]\leq\mathbb{E}\Big[F(t)\exp(kE_t)\Big]\leq 0,
\end{equation}
so $X_1(s)=X_2(s)$ a.s. for $\forall s\in (t,T)$. By \eqref{uniu}, since $X_1(s)=X_2(s)$ a.s. for $\forall s\in (t,T)$, we have $\int_t^T|u_1(s)-u_2(s)|^2dE_s = 0$, thus   $u_1(s)=u_2(s)$ a.s. for $\forall s\in (t,T)$.  The uniqueness is proved.\\

To prove the existence, let $u_0(t)=0$, $\{(X_n(t),u_n(t)); 0\leq t\leq T\}_{n\geq 1}$ be a sequence defined recursively by
\begin{equation}
X_{n-1}(t)+\int_t^T\mu(s,E_s,X_{n-1}(s),u_{n-1}(s))dE_s-\int_t^T u_{n-1}(s)dB_{E_s}-\int_t^T\int_{\mathbb{R} \setminus \{0\}}h(s,z)\tilde{N}(dE_s,dz)=X_n.
\end{equation}
Then
\begin{equation}
\begin{cases}
   & dX_n(t)=-\mu(t,E_t,X_{n-1}(t),u_{n-1}(t))dE_t+u_{n-1}(t)dB_{E_t}+ \int_{\mathbb{R} \setminus \{0\}}h(t,z)\tilde{N}(dE_t,dz),\\

   & d X_{n+1}(t)=-\mu(t,E_t,X_{n}(t),u_{n}(t))dE_t+u_{n}(t)dB_{E_t}+ \int_{\mathbb{R} \setminus \{0\}}h(t,z)\tilde{N}(dE_t,dz),\\

   &X_n(T)=X_{n+1}(T)=X.

   \end{cases}
\end{equation}
By It\^o formula in Lemma \ref{itofor}, there exists $k>0$ such that
\begin{equation}
\begin{aligned}
&\Big| X_{n+1}(t)-X_n(t)\Big|^2+\int_t^T(u_{n}(s)-u_{n-1}(s))^2dE_s+2\int_t^T(X_{n+1}(s)-X_n(s))(u_{n}(s)-u_{n-1}(s))dB_{E_s}\\
&=2\int_t^T(X_{n+1}(s)-X_n(s))\Big(\mu(s,E_s,X_{n}(s),u_{n}(s))-\mu(s,E_s,X_{n-1}(s),u_{n-1}(s))  \Big)dE_s\\
&\leq 2L_\mu \int_t^T|X_{n+1}(s)-X_n(s)|\Big(|X_n(s)-X_{n-1}(s)|+|u_{n}(s)-u_{n-1}(s)|\Big)dE_s\\
&\leq k\Big[ \int_t^T|X_{n+1}(s)-X_n(s)|^2dE_s+ \int_t^T|X_n(s)-X_{n-1}(s)|^2dE_s\Big]+\frac{1}{2}\int_t^T|u_{n}(s)-u_{n-1}(s)|^2dE_s.
\end{aligned}
\end{equation}
Taking expectation on both sides implies
\begin{equation}\label{exist1}
\begin{aligned}
\mathbb{E}\Big| X_{n+1}(t)-X_n(t)\Big|^2&+\frac{1}{2}\mathbb{E}\int_t^T|u_{n}(s)-u_{n-1}(s)|^2dE_s\\
\leq & k\mathbb{E}\Big[ \int_t^T|X_{n+1}(s)-X_n(s)|^2dE_s+ \int_t^T|X_n(s)-X_{n-1}(s)|^2dE_s\Big].
\end{aligned}
\end{equation}

Define $F_n(t)=\int_t^T\Big| X_{n}(s)-X_{n-1}(s)\Big|^2dE_s$ for all $n\geq 1$, then $F_n(T)=0$ and
\begin{equation}
\begin{aligned}
-d\Big(F_{n+1}(t)\exp(kE_t)\Big)&=-\exp(kE_t)dF_{n+1}(t)-k\exp(kE_t)F_{n+1}(t)dE_t\\
&=\exp(kE_t)\Big[\Big|X_{n+1}(t)-X_n(t)\Big|^2-k\int_t^T\Big|X_{n+1}(s)-X_n(s)\Big|^2dE_s\Big]dE_t,
\end{aligned}
\end{equation}
By a similar argument for uniqueness and using \eqref{exist1},
\begin{equation}
\begin{aligned}
&\mathbb{E}\Big[F_{n+1}(t)\exp(kE_t)\Big]=\mathbb{E}\Big[\int_t^T\exp(kE_s)\Big[\Big|X_{n+1}(s)-X_n(s)\Big|^2-k\int_s^T\Big|X_{n+1}(l)-X_n(l)\Big|^2dE_l\Big]dE_s\Big]\\
&=\mathbb{E}\Big[\mathbb{E}\Big[\int_t^T\exp(kE_s)\Big[\Big|X_{n+1}(s)-X_n(s)\Big|^2-k\int_s^T\Big|X_{n+1}(l)-X_n(l)\Big|^2dE_l\Big]dE_s\Big]\Big|\{\sigma(E_s,s\in(t,T))\}\Big]\\
&=\mathbb{E}\Big[\int_t^T\exp(kE_s)\mathbb{E}\Big[\Big|X_{n+1}(s)-X_n(s)\Big|^2-k\int_s^T\Big|X_{n+1}(l)-X_n(l)\Big|^2dE_l\Big]dE_s\Big|\{\sigma(E_s,s\in(t,T))\}\Big]\\
&\leq \mathbb{E}\Big[\int_t^T\exp(kE_s)k\mathbb{E}\Big[\int_s^T|X_n(l)-X_{n-1}(l)|^2dE_l\Big]  dE_s\Big|\{\sigma(E_s,s\in(t,T))\}\Big]\\
&= \mathbb{E}\Big[\int_t^Tk\exp(kE_s)\mathbb{E}\Big[F_n(s)\Big]  dE_s\Big|\{\sigma(E_s,s\in(t,T))\}\Big]\\
&=\mathbb{E}\Big[\int_t^Tk\exp(kE_s)F_n(s) dE_s\Big],
\end{aligned}
\end{equation}
letting $t=0$,
\begin{equation}
\mathbb{E}F_{n+1}(0)\leq \mathbb{E}\int_0^Tke^{kE_s}F_n(s)dE_s\leq \mathbb{E}\Big[\Big(e^{kE_T}\Big)^n\frac{F_1(0)}{n!}\Big]\rightarrow 0,\  as\  n\rightarrow \infty.
\end{equation}
Thus, $\{X_n\}$ is a Cauchy sequence in $L^2(\Omega\times\mathbb{R}_+)$. Taking \eqref{exist1} into consideration,  $\{u_n\}$ is also a Cauchy sequence in $L^2(\Omega\times\mathbb{R}_+)$. Thus, the existence of solution to \eqref{BSDE} is proved.
\end{proof}

\section{Time-changed Stochastic Control Problem}

In this section, we solve the time-changed stochastic control problem through the maximum principle approach. An example is provided to illustrate how our method works for a particular time-changed stochastic problem.\\

We consider a performance criterion $J=J(u)$ of the form
\begin{equation}\label{performance1}
J(u)=\mathbb{E}  \Big[\int_0^Tg(t, E_t, X(t), u(t))dE_t+h(X(T))\Big], \ u\in \mathcal{A} ,
\end{equation}
where $g:[0,T]\times \mathbb{R}_+ \times \mathbb{R} \times U   \rightarrow \mathbb{R}$ is continuous, $h:\mathbb{R}\rightarrow \mathbb{R}$ is $C^1, T<\infty $ is a fixed deterministic time and
\begin{equation}
\mathbb{E} \Big[\int_0^Tg(t, E_t, X(t), u(t))dE_t+h(X(T))\Big]<\infty, \forall u\in \mathcal{A}.
\end{equation}

The stochastic control problem is to find the optimal control $u^*\in \mathcal{A}$ such that
\begin{equation}\label{supj}
J(u^*)=\sup_{u\in \mathcal{A}}J(u).
\end{equation}

Since $E_t$ is right continuous and nondecreasing, $\frac{dE_t}{dt}$ exists for $t\geq 0$ a.e. \\

Define the $Hamiltonian\ H:[0,T]\times\mathbb{R_+}\times\mathbb{R}\times U\times\mathbb{R}\times \mathbb{R}\times\mathcal{R}$ by
\begin{equation}\label{hamil}
\begin{aligned}
H(t_1,t_2, x, u,p,q,r)=&g(t_1,t_2, x, u)+pb(t_1,t_2, x, u)+q\sigma(t_1,t_2, x, u)\\
&+\int_{\mathbb{R}}\gamma(t_1,t_2, x, u,z)r(t_2,z)\nu(dz),
\end{aligned}
\end{equation}
or
\begin{equation}
\begin{aligned}
H(t,E_t, X(t), u(t),p(t),&q(t),r(t,z))=g(t,E_t, X(t), u(t))+p(t)b(t,E_t,X(t),u(t))\\
&+q(t)\sigma(t,E_t, X(t), u(t))+\int_{\mathbb{R}}\gamma(t,E_t, X(t), u(t),z)r(E_t,z)\nu(dz),
\end{aligned}
\end{equation}
 where $\mathcal{R}$ is the set of functions $r:\mathbb{R_+}\times\mathbb{R} \to \mathbb{R}$ such that the integrals in \eqref{hamil} exists.

Define the adjoint equation in the unknown processes $p(t) \in \mathbb{R},\ q(t)\in \mathbb{R}$, and $r(t,z)\in \mathbb{R}$ in the backward stochastic differential equations
\begin{equation}\label{adj1}
\begin{aligned}
dp(t)=&-H_x(t,E_t,X(t),u(t),p(t),q(t),r(t,\cdot))dE_t\\
&+q(t)dB_{E_t}+\int_{\mathbb{R}}r(E_t,z)\tilde{N}(dE_t,dz), t<T\\
p(T)=&h_x(X(T)).
\end{aligned}
\end{equation}

\begin{tm}\label{tcmpt}
(Time-Changed Maximum Principle Theorem) Let $\hat{u}\in \mathcal{A}$ with corresponding solution $\hat{X}=X^{(\hat{u})}$ of \eqref{simSDE} and suppose there exists a solution $(\hat{p}(t), \hat{q}(t), \hat{r}(t,z))$ of the corresponding adjoint equation \eqref{adj1} satisfying
\begin{equation}
\mathbb{E}\Big[ \int_0^T (\hat{X}(t)-X^{(u)}(t))^2\Big(\hat{q}^2(t)+\int_{\mathbb{R}}\hat{r}^2(E_t,z)\nu(dz) \Big)dE_t\Big]<\infty
\end{equation}
and
\begin{equation}
\mathbb{E}\Big[ \int_0^T \hat{p}^2(t)\Big(\sigma^2(t,E_t,X^{(u)}(t),u(t) )+\int_{\mathbb{R}}\gamma^2(t,E_t,X^{(u)}(t),u(t),z)\nu(dz) \Big)dE_t\Big]<\infty, \forall u\in\mathcal{A}.
\end{equation}
Moreover, suppose that
\begin{equation}
H(t,E_t,\hat{X}(t),\hat{u}(t),\hat{p}(t), \hat{q}(t), \hat{r}(t,\cdot))=\sup_{v\in U} H(t,E_t,\hat{X}(t),v,\hat{p}(t), \hat{q}(t), \hat{r}(t,\cdot))
\end{equation}
for all $t$, that $h(x)$ in \eqref{performance1} is a concave function of $x$ and that
\begin{equation}
\hat{H}(x):=\max_{v\in U} H(t_1,t_2,x,v,\hat{p}(t), \hat{q}(t), \hat{r}(t,\cdot))
\end{equation}
exists and is a concave function of $x$ for all $t\in[0,T]$.
Then $\hat{u}$ is an optimal control of stochastic control problem \eqref{supj}.
\end{tm}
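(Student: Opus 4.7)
The plan is to follow the standard Arrow-type sufficiency argument for the maximum principle, adapted to the time-changed It\^o formula of Lemma~\ref{itofor}. Fix an arbitrary admissible $u \in \mathcal{A}$ with associated state $X = X^{(u)}$, set $\tilde{X}(t) = \hat{X}(t) - X(t)$, and write hatted coefficients (e.g.\ $\hat{b}$, $\hat{\sigma}$, $\hat{\gamma}$, $\hat{g}$) for evaluation at the optimal data $(t,E_t,\hat{X}(t),\hat{u}(t))$ and unhatted ones for evaluation at $(t,E_t,X(t),u(t))$. I would first decompose the performance gap as
\begin{equation*}
J(\hat{u}) - J(u) = \mathbb{E}\Big[\int_0^T (\hat{g}-g)\,dE_t\Big] + \mathbb{E}\big[h(\hat{X}(T)) - h(X(T))\big],
\end{equation*}
and use concavity of $h$ together with the terminal condition $\hat{p}(T) = h_x(\hat{X}(T))$ from~\eqref{adj1} to bound the terminal term from below by $\mathbb{E}[\hat{p}(T)\tilde{X}(T)]$.

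Next I would apply the time-changed It\^o product formula (Lemma~\ref{itofor} with $F(p,x) = px$) to $\hat{p}(t)\tilde{X}(t)$. Since $\tilde{X}$ has forward dynamics with drift $(\hat{b}-b)\,dE_t$, diffusion $(\hat{\sigma}-\sigma)\,dB_{E_t}$, and jump coefficient $(\hat{\gamma}-\gamma)$ against $\tilde{N}(dE_t,dz)$, while $\hat{p}$ is driven by the adjoint BSDE~\eqref{adj1}, reading off the operator $L_2$ in~\eqref{linearop} shows that the finite-variation $dE_t$-part of $d(\hat{p}\tilde{X})$ is
\begin{equation*}
\hat{p}(\hat{b}-b) - \tilde{X}\,\hat{H}_x + \hat{q}(\hat{\sigma}-\sigma) + \int_{\mathbb{R}} \hat{r}(E_t,z)\,(\hat{\gamma}-\gamma)\,\nu(dz),
\end{equation*}
where $\hat{H}_x$ abbreviates $H_x(t,E_t,\hat{X},\hat{u},\hat{p},\hat{q},\hat{r})$, together with stochastic $dB_{E_t}$- and $\tilde{N}(dE_t,dz)$-integrals. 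The two integrability hypotheses in the statement are tailored precisely so that, after rewriting each time-changed stochastic integral through the subordinator $D$ exactly as in the proof of Lemma~\ref{exunbsde}, those stochastic integrals are true martingales with zero expectation.

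I would then use the definition~\eqref{hamil} of $H$ to write
\begin{equation*}
\hat{g}-g = \big[H(t,E_t,\hat{X},\hat{u},\hat{p},\hat{q},\hat{r}) - H(t,E_t,X,u,\hat{p},\hat{q},\hat{r})\big] - \hat{p}(\hat{b}-b) - \hat{q}(\hat{\sigma}-\sigma) - \int_{\mathbb{R}} \hat{r}(\hat{\gamma}-\gamma)\,\nu(dz),
\end{equation*}
so that adding this to the previous expression telescopes the $\hat{p},\hat{q},\hat{r}$ cross terms and gives
\begin{equation*}
J(\hat{u}) - J(u) \geq \mathbb{E}\int_0^T \Big[H(t,E_t,\hat{X},\hat{u},\hat{p},\hat{q},\hat{r}) - H(t,E_t,X,u,\hat{p},\hat{q},\hat{r}) - \tilde{X}\,\hat{H}_x\Big]dE_t.
\end{equation*}
The pointwise maximum assumption then gives $H(t,E_t,\hat{X},\hat{u},\hat{p},\hat{q},\hat{r}) = \hat{H}(\hat{X})$ and $H(t,E_t,X,u,\hat{p},\hat{q},\hat{r}) \leq \hat{H}(X)$, and concavity of $\hat{H}$ combined with the envelope identity $\hat{H}_x(\hat{X}(t)) = H_x(t,E_t,\hat{X}(t),\hat{u}(t),\hat{p}(t),\hat{q}(t),\hat{r}(t,\cdot))$ yields $\hat{H}(\hat{X}) - \hat{H}(X) \geq \hat{H}_x\,\tilde{X}$, so the integrand is pointwise nonnegative and $J(\hat{u}) \geq J(u)$, as required.

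The main obstacle is the careful bookkeeping in the time-changed It\^o product formula together with the verification that the integrals against $dB_{E_t}$ and $\tilde{N}(dE_t,dz)$ are genuine martingales rather than merely local martingales; the two integrability hypotheses encode precisely this, but the rigorous argument for vanishing of expectations requires rewriting each integral through the subordinator $D$ as in Lemma~\ref{exunbsde}. A secondary subtle point is the envelope identity used in the last step, which is immediate at an interior maximizer via the first-order condition $H_v = 0$, and otherwise follows from concavity of $\hat{H}$ by a supporting-hyperplane argument at $\hat{X}(t)$.
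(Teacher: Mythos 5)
Your proposal is correct and follows essentially the same route as the paper: decompose $J(\hat{u})-J(u)$, bound the terminal term via concavity of $h$ and $\hat p(T)=h_x(\hat X(T))$, apply the time-changed It\^o product formula to $\hat p(t)(\hat X(t)-X(t))$, kill the stochastic integrals using the two integrability hypotheses, and conclude via the Hamiltonian maximum condition and concavity of $\hat H$. The only difference is that you spell out the final Arrow-type concavity/envelope step, which the paper compresses into a citation of Framstad--{\O}ksendal--Sulem.
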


\begin{proof}
Let $u \in \mathcal{A}$ be an admissible control with corresponding state process $X(t)=X^{(u)}(t)$. We would like to show that
\begin{equation}
J(\hat{u})-J(u)=\mathbb{E}\Big[ \int_0^T{g(t,E_t,\hat{X}(t),\hat{u}(t))-g(t,E_t,X(t),u(t))}dt+h(\hat{X}(T))-h(X(T)) \Big]\geq 0.
\end{equation}

Since $g$ is concave, using It\^o formula \eqref{itolevy},
\begin{equation}
\begin{aligned}
&\mathbb{E}[h(\hat{X}(T))-h(X(T))]\geq  \mathbb{E}[h_x(\hat{X}(T))(\hat{X}(T)-X(T))]=\mathbb{E}[(\hat{X}(T)-X(T))\hat{P}(T)]\\
&=\mathbb{E}\Big[ \int_0^T (\hat{X}(t)-X(t))d\hat{p}(t)+ \int_0^T \hat{p}(t) d(\hat{X}(t)-X(t))+ \int_0^T d\hat{p}(t) d(\hat{X}(t)-X(t))\Big]\\
&=\mathbb{E}\Big[ \int_0^T (\hat{X}(t)-X(t))d\hat{p}(t)+ \int_0^T \hat{p}(t) d(\hat{X}(t)-X(t))\\
&\ \ \ + \int_0^T \hat{q}(t)\Big(\sigma(t,E_t,\hat{X}(t),\hat{u}(t))-\sigma(t,E_t,X(t),u(t))\Big)dE_t\\
&\ \ \ +\int_0^T\int_{\mathbb{R}}\hat{r}(t,z)\Big(\gamma(t,E_t,\hat{X}(t),\hat{u}(t))-\gamma(t,E_t,X(t),u(t))\Big)\nu(dz)dE_t\Big].
\end{aligned}
\end{equation}

Among above terms,
\begin{equation}
\mathbb{E}\Big[\int_0^T\hat{p}(t)d(\hat{X}(t)-X(t))\Big]=\mathbb{E}\Big[\int_0^T \hat{p}(t)\Big(b(t,E_t,\hat{X}(t),u(t))-b(t,E_t,X(t),u(t))\Big)dE_t\Big]
\end{equation}

Thus,
\begin{equation}\label{equation1}
\begin{aligned}
J(\hat{u})-J(u)=&\mathbb{E}\Big[ \int_0^T (\hat{X}(t)-X(t))d\hat{p}(t)+ \int_0^T g(t,E_t,\hat{X}(t),\hat{u}(t))-g(t,E_t,X(t),u(t))dE_t\\
&+\int_0^T \hat{p}(t)\Big(b(t,E_t,\hat{X}(t),\hat{u}(t))-b(t,E_t,X(t),u(t))\Big)dE_t\\
&+ \int_0^T \hat{q}(t)\Big(\sigma(t,E_t,\hat{X}(t),\hat{u}(t))-\sigma(t,E_t,X(t),u(t))\Big)dE_t\\
&+\int_0^T\int_{\mathbb{R}}\hat{r}(t,z)\Big(\gamma(t,E_t,\hat{X}(t),\hat{u}(t))-\gamma(t,E_t,X(t),u(t))\Big)\nu(dz)dE_t\Big].
\end{aligned}
\end{equation}

In addition,
\begin{equation}\label{equation2}
\begin{aligned}
&H(t,E_t,\hat{X}(t),\hat{u}(t),\hat{p}(t),\hat{q}(t),\hat{r}(t))-H(t,E_t,X(t),u(t),\hat{p}(t),\hat{q}(t),\hat{r}(t))\\
=&\Big(g(t,E_t,\hat{X}(t),\hat{u}(t))-g(t,E_t,X(t),u(t))\Big)+\hat{p}(t)\Big(b(t,E_t,\hat{X}(t),\hat{u}(t))-b(t,E_t,X(t),u(t))\Big)\\
&+\hat{q}(t)\Big(\sigma(t,E_t,\hat{X}(t),\hat{u}(t))-\sigma(t,E_t,X(t),u(t))\Big)\\
&+\int_{\mathbb{R}}\hat{r}(t,z)\Big( \gamma(t,E_t,\hat{X}(t),\hat{u}(t))- \gamma(t,E_t,X(t),u(t))\Big)\nu(dz),
\end{aligned}
\end{equation}

and by \eqref{adj1} we have
\begin{equation}\label{equation3}
\begin{aligned}
&(\hat{X}(t)-X(t))d\hat{p}(t)=\hat{X}(t)d\hat{p}(t)-X(t)d\hat{p}(t)\\
=&\hat{X}(t)\Big[-H_x(t,E_t,\hat{X}(t),\hat{u}(t),\hat{p}(t),\hat{q}(t),\hat{r}(t,\cdot))dE_t+\hat{q}(t)dB_{E_t}+\int_{\mathbb{R}}\hat{r}(t,z)\tilde{N}(dE_t,dz)\Big]\\
&-X(t)\Big[-H_x(t,E_t,\hat{X}(t),\hat{u}(t),\hat{p}(t),\hat{q}(t),\hat{r}(t,\cdot))dE_t+\hat{q}(t)dB_{E_t}+\int_{\mathbb{R}}\hat{r}(t,z)\tilde{N}(dE_t,dz)\Big]\\
=&-(\hat{X}(t)-X(t))H_x(t,E_t,\hat{X}(t),\hat{u}(t),\hat{p}(t),\hat{q}(t),\hat{r}(t,\cdot))dE_t\\
&+(\hat{X}(t)-X(t))(\hat{q}(t)dB_{E_t}+ \int_{\mathbb{R}}\hat{r}(t,z)\tilde{N}(dE_t,dz)).\\
\end{aligned}
\end{equation}

Then, since $H$ is concave in $x$, putting equations \eqref{equation2} and \eqref{equation3} into \eqref{equation1} and following the proof in \cite{fos}, we get
\begin{equation}
\begin{aligned}
J(\hat{u})-J(u)=&\mathbb{E}\Big[ \int_0^T -(\hat{X}(t)-X(t))H_x(t,E_t,\hat{X}(t),\hat{u}(t),\hat{p}(t),\hat{q}(t),\hat{r}(t,\cdot))dE_t\\
&+\int_0^TH(t,E_t,\hat{X}(t),\hat{u}(t),\hat{p}(t),\hat{q}(t),\hat{r}(t,\cdot))-H(t,E_t,X(t),u(t),\hat{p}(t),\hat{q}(t),\hat{r}(t,\cdot))dE_t\\
&\geq 0.
\end{aligned}
\end{equation}

\end{proof}

\begin{exmp}(The Time-Changed Stochastic Linear Regulator Problem)\\
  The Linear Regulator Problem aims to reduce the amount of work or energy consumed by the control system to optimize the controller. In this example, we consider the following time-changed stochastic linear regulator problem:
\begin{equation}
\Phi(x_0)=\inf_{u\in \mathcal{A}}\mathbb{E}\Big[\int_0^T\frac{X^2(t)+u^2(t)}{2}dE_t+\lambda X^2(T)\Big],
\end{equation}
where
\begin{equation}
dX(t)=u(t)dE_t+\sigma dB_{E_t}+\int_\mathbb{R}z\tilde{N}(dE_t,dz),\ X(0)=x_0.
\end{equation}

Construct the $Hamiltonian$:
\begin{equation}
H(t_1,t_2,x,u,p,q,r)=\frac{x^2+u^2}{2}+pu+\sigma q+\int_\mathbb{R}\gamma z \nu(dz).
\end{equation}
The adjoint equations are
\begin{equation}\label{ex1p1}
\begin{cases}
   & dp(t)=-X(t)dE_t+q(t)dB_{E_t}+\int_\mathbb{R}r (E_t, z) \tilde{N}(dE_t,dz),\\

   & p(T)=2\lambda X(T).
   \end{cases}
\end{equation}
The first and second order condition implies that $Hamiltonian:\ H(t_1,t_2,x,u,p,q,r)$ achieves the minimum at $u^*(t)=-p(t)$.

To find an explicit solution of $u^*(t)$, suppose $p(t)=h(E_t)X(t)$, where $h:\mathbb{R}_+\rightarrow \mathbb{R}_+$. Then $u^*(t)=-h(E_t)X(t)$ and
\begin{equation}\label{ex1p2}
\begin{aligned}
dp(t)&=h(E_t)dX(t)+h'(E_t)X(t)dE_t\\
&=h(E_t)\Big(u(t)dE_t+\sigma dB_{E_t}+\int_\mathbb{R}z\tilde{N}(dE_t,dz)\Big)+h'(E_t)X(t)dE_t\\
&=X(t)(-h^2(E_t)+h'(E_t))dE_t+h(E_t)\sigma dB_{E_t}+h(E_t)\int_\mathbb{R}z\tilde{N}(dE_t,dz).
\end{aligned}
\end{equation}

Compare \eqref{ex1p1} and \eqref{ex1p2}, $-h^2(E_t)+h'(E_t)=-1$ and $h(E_T)=2\lambda$. The general solution to this ordinary differential equation gives
\begin{equation}
h(E_t)=-\frac{2\lambda-1+(2\lambda+1)e^{2(E_t-E_T)}}{2\lambda-1-(2\lambda+1)e^{2(E_t-E_T)}}.
\end{equation}
Thus, we have the explicit formula for the optimal control $u^*(t)=-h(E_t)X(t)$. Similarly, $q(t)=h(E_t)\sigma$ and $r(E_t,z)=h(E_t)z$. A simulation of the optimal control $u^*(t)$ with $\lambda=-\frac{1}{2}, \sigma = 1, x_0=-.01$, standard normal distribution $\nu$, and inverse stable subordinator $E(t)$ having $\alpha = .9$ is displayed in Figure \ref{fig:EXAMPLE1_9}.

Keeping  all others parts the same as in the figure \ref{fig:EXAMPLE1_9}, we also simulate the optimal control $u^*(t)$ for $\alpha = .7$ and $\alpha = .5$ in Figure \ref{fig:EXAMPLE1_7} and \ref{fig:EXAMPLE1_5}, respectively.
Overall, replacing $t$ by $E_t$ would only insert some constant periods into the original process. As $\alpha$ gets closer to $1$, the constant periods vanish gradually.

\begin{figure}[h]
\begin{center}
\caption{Simulation of $u^*(t)$ for Example 1, $\alpha=.9$}\label{fig:EXAMPLE1_9}
\includegraphics[scale=.66]{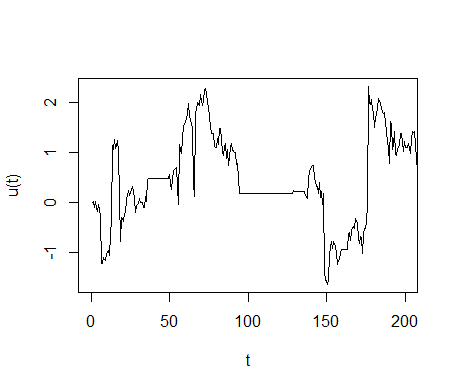}
\end{center}
\end{figure}

\begin{figure}[h]
\begin{center}
\caption{Simulation of $u^*(t)$ for Example 1, $\alpha=.7$}\label{fig:EXAMPLE1_7}
\includegraphics[scale=.66]{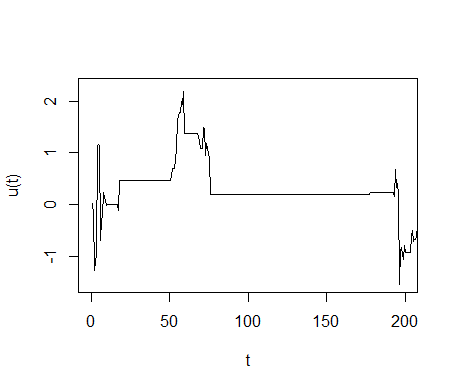}
\end{center}
\end{figure}

\begin{figure}[h]
\begin{center}
\caption{Simulation of $u^*(t)$ for Example 1, $\alpha=.5$}\label{fig:EXAMPLE1_5}
\includegraphics[scale=.66]{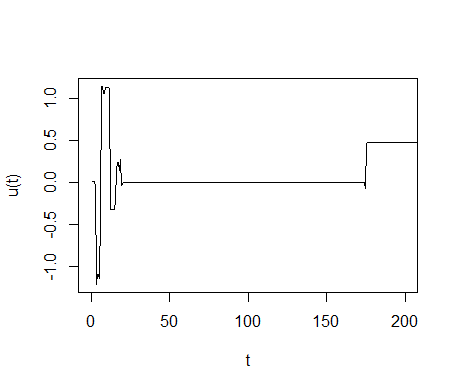}
\end{center}
\end{figure}

\end{exmp}

\section{A More General Time-changed Stochastic Control Problem }

Now we extend the time-changed SDE \eqref{simSDE} to a more general case by adding a time drift term as below, \begin{equation}
\begin{aligned}
dX(t)=&\mu(t, E_t, X(t-), u(t))dt+b(t, E_t, X(t-), u(t))dE_t+\sigma(t, E_t, X(t-), u(t))dB_{E_t}\\
&+\int_{|y|<c}\gamma(t, E_t, X(t-), u(t),y)\tilde{N}(dE_t,dy),
\end{aligned}
\end{equation}
with $X(0)=x_0\neq 0$,
where $\mu,b,\sigma,\gamma$ are real-valued functions satisfying the Lipschitz condition \ref{lip} and assumption \ref{tec}.

Suppose the performance function is given by
\begin{equation}\label{performance2}
J(u)=\mathbb{E} \Big[\int_0^Tf(t, E_t, X(t), u(t))dt+\int_0^Tg(t, E_t, X(t), u(t))dE_t+h(X(T))\Big], \ u\in \mathcal{A} ,
\end{equation}
where   the function $f, g:[0,T]\times \mathbb{R}_+ \times \mathbb{R} \times U   \rightarrow \mathbb{R}$ are continuous, $h:\mathbb{R}\rightarrow \mathbb{R}$ is $C^1, T<\infty $ is a fixed deterministic time and
\begin{equation}
\mathbb{E} \Big[\int_0^Tf(t, E_t, X(t), u(t))dt+\int_0^Tg(t, E_t, X(t), u(t))dE_t+h(X(T))\Big]<\infty, \forall u\in \mathcal{A}.
\end{equation}

The stochastic control problem is to find the optimal control $u^*\in \mathcal{A}$ such that
\begin{equation}\label{supjex}
J(u^*)=\sup_{u\in \mathcal{A}}J(u).
\end{equation}

\begin{rk}
Performance functions \eqref{performance1} and \eqref{performance2} are slightly different in terms of their integral kernels. This difference results in different $Hamiltonians$ and adjoint equations.
\end{rk}

Define  the  $Hamiltonian\ H\ :[0,T]\times\mathbb{R_+}\times\mathbb{R}\times U\times\mathbb{R}\times \mathbb{R}\times\mathcal{R}\to \mathbb{R}$ by

\begin{equation}
\begin{aligned}
H(t_1,t_2, x, u,p,q,r)=&\Big(p \mu(t_1,t_2, x, u)+f(t_1,t_2, x, u)\Big)\\
&+\Big(pb(t_1,t_2, x, u)+q\sigma(t_1,t_2, x, u)+g(t_1,t_2, x, u)\Big)\frac{dt_2}{dt_1}\\
&+\int_{\mathbb{R}}\gamma(t_1,t_2, x, u,z)r(t,z)\nu(dz)\frac{dt_2}{dt},
\end{aligned}
\end{equation}
or
\begin{equation}
\begin{aligned}
H(t,E_t, X(t),& u(t),p(t),q(t),r(t,z))=\Big(p(t)\mu(t,E_t, X(t), u(t))+f(t,E_t, X(t), u(t))\Big)\\
&+\Big(p(t)b(t,X(t),u(t))+q(t)\sigma(t,E_t, X(t), u(t))+g(t,E_t, X(t),u(t))\Big)\frac{dE_t}{dt}\\
&+\int_{\mathbb{R}}\gamma(t,E_t, X(t), u(t),z)r(t,z)\nu(dz)\frac{dE_t}{dt}.
\end{aligned}
\end{equation}

Define the adjoint equation
\begin{equation}\label{adj2}
\begin{aligned}
dp(t)=&-H_x(t,E_t,X(t),u(t),p(t),q(t),r(t,\cdot))dt\\
&+q(t)dB_{E_t}+\int_{\mathbb{R}}r(t,z)\tilde{N}(dE_t,dz), t<T\\
p(T)=&h_x(X(T))
\end{aligned}
\end{equation}

\begin{tm}\label{tcmptex}
(Time-Changed Maximum Principle Theorem) Let $\hat{u}\in \mathcal{A}$ with corresponding solution $\hat{X}=X^{(\hat{u})}$ and suppose there exists a solution $(\hat{p}(t), \hat{q}(t), \hat{r}(t,z))$ of the corresponding adjoint equation \eqref{adj1} satisfying
\begin{equation}
\mathbb{E}\Big[ \int_0^T (\hat{X}(t)-X^{(u)}(t))^2\Big(\hat{q}^2(t)+\int_{\mathbb{R}}\hat{r}^2(t,z)\nu(dz) \Big)dE_t\Big]<\infty
\end{equation}
and
\begin{equation}
\mathbb{E}\Big[ \int_0^T \hat{p}^2(t)\Big(\sigma^2(t,E_t,X^{(u)}(t),u(t) )+\int_{\mathbb{R}}\gamma^2(t,E_t,X^{(u)}(t),u(t),z)\nu(dz) \Big)dE_t\Big]<\infty, \forall u\in\mathcal{A}.
\end{equation}
Moreover, suppose that
\begin{equation}
H(t,E_t,\hat{X}(t),\hat{u}(t),\hat{p}(t), \hat{q}(t), \hat{r}(t,\cdot))=\sup_{v\in U} H(t,E_t,\hat{X}(t),v,\hat{p}(t), \hat{q}(t), \hat{r}(t,\cdot))
\end{equation}
for all $t>0$, that $h(x)$ in \eqref{performance2} is a concave function of $x$ and that
\begin{equation}
\hat{H}(x):=\max_{v\in U} H(t_1,t_2,x,v,\hat{p}(t), \hat{q}(t), \hat{r}(t,\cdot))
\end{equation}
exists and is a concave function of $x$ for all $t\in[0,T]$.
Then $\hat{u}$ is an optimal control of stochastic control problem \eqref{supjex}.
\end{tm}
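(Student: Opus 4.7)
The plan is to closely follow the proof of Theorem \ref{tcmpt}, with the bookkeeping adjusted to accommodate the extra $dt$-drift $\mu$ in the state dynamics and the extra $\int_0^T f\,dt$ contribution in the performance functional. The crucial observation is that the Hamiltonian in this section is deliberately defined with the Jacobian factor $dE_t/dt$, so that $\int_0^T H(t,E_t,X,u,p,q,r)\,dt$ naturally splits into a $dt$-integral carrying $p\mu+f$ and a $dE_t$-integral carrying $pb+q\sigma+g+\int\gamma r\,\nu(dz)$. This is precisely what will be required to absorb both the $dt$ and the $dE_t$ contributions that It\^o's formula will produce.

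First, using the shorthand $\hat f(t)=f(t,E_t,\hat X(t),\hat u(t))$ and similarly for $\hat g,\hat b,\hat\mu,\hat\sigma,\hat\gamma$, I would write
\[
J(\hat u)-J(u)=\mathbb{E}\Big[\int_0^T(\hat f-f)\,dt+\int_0^T(\hat g-g)\,dE_t+h(\hat X(T))-h(X(T))\Big],
\]
and use concavity of $h$ together with the terminal condition $\hat p(T)=h_x(\hat X(T))$ in \eqref{adj2} to bound the terminal difference below by $\mathbb{E}[\hat p(T)(\hat X(T)-X(T))]$. Next I would apply Lemma \ref{itofor} to the product $\hat p(t)(\hat X(t)-X(t))$, using that $dB_{E_t}\cdot dB_{E_t}=dE_t$ and that the compensator of $N(dE_t,dz)$ is $\nu(dz)\,dE_t$. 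This generates $dt$-integrals of $\hat p(\hat\mu-\mu)$ and $-(\hat X-X)H_x$, together with $dE_t$-integrals of $\hat p(\hat b-b)$, $\hat q(\hat\sigma-\sigma)$, and $\int_{\mathbb{R}}\hat r(\hat\gamma-\gamma)\,\nu(dz)$, the last two arising from the quadratic covariation of $d\hat p$ with $d(\hat X-X)$. The integrability assumptions in the statement ensure that the $dB_{E_s}$ and $\tilde N(dE_s,dz)$ stochastic integrals are true martingales with vanishing expectation, exactly as in the martingale argument inside Lemma \ref{exunbsde}.

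Collecting these contributions, I would observe that by the very definition of $H$ in this section,
\[
\begin{aligned}
\int_0^T(\hat H-H)\,dt =&\int_0^T\bigl[\hat p(\hat\mu-\mu)+(\hat f-f)\bigr]\,dt\\
&+\int_0^T\Big[\hat p(\hat b-b)+\hat q(\hat\sigma-\sigma)+(\hat g-g)+\int_{\mathbb{R}}\hat r(\hat\gamma-\gamma)\nu(dz)\Big]\,dE_t,
\end{aligned}
\]
which matches the output of the previous step term-for-term. Substituting back then yields the key inequality
\[
J(\hat u)-J(u)\geq \mathbb{E}\Big[\int_0^T\bigl((\hat H-H)-(\hat X-X)H_x(t,E_t,\hat X,\hat u,\hat p,\hat q,\hat r)\bigr)dt\Big].
\]

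To close, I would invoke concavity of $\hat H(x)$ and the pointwise maximality of $\hat u$: by an envelope-type argument (as in \cite{fos}), $\partial_x\hat H(\hat X)=H_x(t,E_t,\hat X,\hat u,\hat p,\hat q,\hat r)$; concavity gives $\hat H(\hat X)-\hat H(X)\geq (\hat X-X)\partial_x\hat H(\hat X)$; and $H(t,E_t,X,u,\hat p,\hat q,\hat r)\leq\hat H(X)$ by definition of $\hat H$. Chaining these three facts produces the pointwise bound $(\hat H-H)-(\hat X-X)H_x\geq 0$, whence $J(\hat u)\geq J(u)$. The main obstacle I anticipate is the bookkeeping in the It\^o step: one must carefully track which covariation terms land in the $dt$-integral versus the $dE_t$-integral, and verify that the insertion of the Jacobian factor $dE_t/dt$ into the definition of $H$ lets every contribution collapse into the single clean expression $\int_0^T\hat H\,dt$ rather than a mixture that cannot be recombined. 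Once this splitting is confirmed, the remainder of the proof is essentially identical to that of Theorem \ref{tcmpt}.
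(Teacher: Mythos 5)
Your proposal follows essentially the same route as the paper's proof: the same decomposition of $J(\hat u)-J(u)$, the same use of concavity of $h$ with the terminal condition $\hat p(T)=h_x(\hat X(T))$, the same It\^o/product-rule step splitting contributions between $dt$- and $dE_t$-integrals, the same recombination via the $dE_t/dt$ factor in the Hamiltonian, and the same final concavity argument from \cite{fos}. Your write-up is, if anything, slightly more explicit than the paper about the martingale-vanishing step and the envelope inequality, but there is no substantive difference in approach.
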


\begin{proof}
Let $u \in \mathcal{A}$ be an admissible control with the  corresponding state process $X(t)=X^{(u)}(t)$. We would like to show that
\begin{equation}
\begin{aligned}
J(\hat{u})-J(u)=&\mathbb{E}\Big[ \int_0^T{f(t,E_t,\hat{X}(t),\hat{u}(t))-f(t,E_t,X(t),u(t))}dt\\
&+\int_0^T{g(t,E_t,\hat{X}(t),\hat{u}(t))-g(t,E_t,X(t),u(t))}dE_t+h(\hat{X}(T))-h(X(T)) \Big]\geq 0.
\end{aligned}
\end{equation}

Since $h$ is concave, using It\^o formula \eqref{itolevy},
\begin{equation}
\begin{aligned}
\mathbb{E}&[h(\hat{X}(T))-g(X(T))]\geq  \mathbb{E}[h_x(\hat{X}(T))(\hat{X}(T)-X(T))]=\mathbb{E}[(\hat{X}(T)-X(T))\hat{p}(T)]\\
=&\mathbb{E}\Big[ \int_0^T (\hat{X}(t)-X(t))d\hat{p}(t)+ \int_0^T \hat{p}(t) d(\hat{X}(t)-X(t))+ \int_0^T d\hat{p}(t) d(\hat{X}(t)-X(t))\Big]\\
=&\mathbb{E}\Big[ \int_0^T (\hat{X}(t)-X(t))d\hat{p}(t)+ \int_0^T \hat{p}(t) d(\hat{X}(t)-X(t))\\
&+ \int_0^T \hat{q}(t)\Big(\sigma(t,E_t,\hat{X}(t),\hat{u}(t))-\sigma(t,E_t,X(t),u(t))\Big)\hat{q}(t)dE_t\\
&+\int_0^T\int_{\mathbb{R}}\hat{r}(t,z)\Big(\gamma(t,E_t,\hat{X}(t),\hat{u}(t))-\gamma(t,E_t,X(t),u(t))\Big)\nu(dz)dE_t\Big].
\end{aligned}
\end{equation}

Among above terms,
\begin{equation}
\begin{aligned}
\mathbb{E}\Big[\int_0^T\hat{p}(t)d(\hat{X}(t)-X(t))\Big]=&\mathbb{E}\Bigg[\int_0^T \hat{p}(t)\Bigg( \Big(\mu(t,E_t,\hat{X}(t),\hat{u}(t))-\mu(t,E_t,X(t),u(t))\Big)dt\\
&+\Big(b(t,E_t,\hat{X}(t),\hat{u}(t))-b(t,E_t,X(t),u(t))\Big)dE_t\Bigg)\Bigg]
\end{aligned}
\end{equation}

Thus,
\begin{equation}
\begin{aligned}
J(\hat{u})-J(u)=&\mathbb{E}\Bigg[ \int_0^T (\hat{X}(t)-X(t))d\hat{p}(t)+ \int_0^T f(t,E_t,\hat{X}(t),\hat{u}(t))-f(t,E_t,X(t),u(t))dt\\
&+ \int_0^T g(t,E_t,\hat{X}(t),\hat{u}(t))-g(t,E_t,X(t),u(t))dE_t\\
&+\int_0^T \hat{p}(t)\Big[ \Big(\mu(t,E_t,\hat{X}(t),\hat{u}(t))-\mu(t,E_t,X(t),u(t))\Big)dt\\
&+\Big(b(t,E_t,\hat{X}(t),\hat{u}(t))-b(t,E_t,X(t),u(t))\Big)dE_t\Big]\\
&+ \int_0^T \hat{q}(t)\Big(\sigma(t,E_t,\hat{X}(t),\hat{u}(t))-\sigma(t,E_t,X(t),u(t))\Big)dE_t\\
&+\int_0^T\int_{\mathbb{R}}\hat{r}(t,z)\Big(\gamma(t,E_t,\hat{X}(t),\hat{u}(t))-\gamma(t,E_t,X(t),u(t))\Big)\nu(dz)dE_t\Bigg].
\end{aligned}
\end{equation}

In addition,
\begin{equation}
\begin{aligned}
&(H(t,E_t,\hat{X}(t),\hat{u}(t),\hat{p}(t),\hat{q}(t),\hat{r}(t))-H(t,E_t,X(t),u(t),\hat{p}(t),\hat{q}(t),\hat{r}(t)))dt\\
=&\Big[\hat{p}(t)\mu(t,E_t,\hat{X}(t),\hat{u}(t))-\hat{p}(t)\mu(t,E_t,X(t),u(t))+  f(t,E_t,\hat{X}(t),\hat{u}(t))-f(t,E_t,X(t),u(t))\Big]dt\\
&+  \Big(g(t,E_t,\hat{X}(t),\hat{u}(t))-g(t,E_t,X(t),u(t))\Big)dE_t+\Big(\hat{p}(t)b(t,E_t,\hat{X}(t),\hat{u}(t))+\hat{q}(t)\sigma(t,E_t,\hat{X}(t),\hat{u}(t))\Big)dE_t\\
&-\Big(\hat{p}(t)b(t,E_t,X(t),u(t))+\hat{q}(t)\sigma(t,E_t,X(t),u(t))\Big)dE_t\\
&+\int_{\mathbb{R}}\hat{r}(t,z)\Big( \gamma(t,E_t,\hat{X}(t),\hat{u}(t))- \gamma(t,E_t,X(t),u(t))\Big)\nu(dz)dE_t,
\end{aligned}
\end{equation}

and
\begin{equation}
\begin{aligned}
&(\hat{X}(t)-X(t))d\hat{p}(t)=\hat{X}(t)d\hat{p}(t)-X(t)d\hat{p}(t)\\
=&\hat{X}(t)\Big[-H_x(t,E_t,\hat{X}(t),\hat{u}(t),\hat{p}(t),\hat{q}(t),\hat{r}(t,))dt+\hat{q}(t)dB_{E_t}+\int_{\mathbb{R}}r(t,z)\tilde{N}(dE_t,dz)\Big]\\
&-X(t)\Big[-H_x(t,E_t,\hat{X}(t),\hat{u}(t),\hat{p}(t),\hat{q}(t),\hat{r}(t,))dt+\hat{q}(t)dB_{E_t}+\int_{\mathbb{R}}r(t,z)\tilde{N}(dE_t,dz)\Big]\\
=&-(\hat{X}(t)-X(t))H_x(t,E_t,\hat{X}(t),\hat{u}(t),\hat{p}(t),\hat{q}(t),\hat{r}(t,\cdot))dt\\
&+(\hat{X}(t)-X(t))\Big(\hat{q}(t)dB_{E_t}+ \int_{\mathbb{R}}\hat{r}(t,z)\tilde{N}(dE_t,dz)\Big).\\
\end{aligned}
\end{equation}

Then, by concavity of $H$ and following the proof in \cite{fos},
\begin{equation}
\begin{aligned}
J(\hat{u})-J(u)=&\mathbb{E}\Big[ \int_0^T -(\hat{X}(t)-X(t))H_x(t,E_t,\hat{X}(t),\hat{u}(t),\hat{p}(t),\hat{q}(t),\hat{r}(t,\cdot))dt\\
&+\int_0^TH(t,E_t,\hat{X}(t),\hat{u}(t),\hat{p}(t),\hat{q}(t),\hat{r}(t,\cdot))-H(t,E_t,X(t),u(t),\hat{p}(t),\hat{q}(t),\hat{r}(t,\cdot))dt\Big]\\
&\geq 0.
\end{aligned}
\end{equation}
\end{proof}

\begin{exmp}(Income and Consumption Optimization)
Consider the stochastic control problem
\begin{equation}
\Phi(x_0)=\sup_{u\in \mathcal{A}} \mathbb{E}\Big[\int_0^{\tau}\exp(-\delta t) u(t)^2dt \Big],
\end{equation}
where
\begin{equation}\tau=\inf\{t>0; X(t)\leq 0\}\end{equation}
and
\begin{equation}
dX(t)=-u(t) dt+X(t)\Big(bdE_t+\sigma dB_{E_t}+\theta\int_\mathbb{R}z\tilde{N}(dz,dE_t)\Big),\ X(0)=x_0>0,
\end{equation}
where $\delta>0, \sigma,$ and $\theta$ are constants and $b=-\frac{\sigma^2+\theta^2\int_\mathbb{R}z^2\nu(dz)}{2}$.

We can interpret $u(t)$ as the consumption rate, $X(t)$ as the corresponding wealth, and $\tau$ as the bankruptcy time. Then $\Phi$ represents the maximal expected total quadratic utility of the consumption up to bankruptcy time.

Define the $Hamiltonian\ H:$
\begin{equation}
H(t)=-p(t)u(t)+\exp(-\delta t)u(t)^2+X(t)\Big(p(t)b+q(t)\sigma+\int_\mathbb{R}\theta z r(t,z)\nu(dz)\Big)\frac{dE_t}{dt},
\end{equation}
and the adjoint equation
\begin{equation}\label{adje1}
\begin{aligned}
dp(t)=&-\Big(p(t)b+q(t)\sigma+\int_\mathbb{R}\theta z r(t,z)\nu(dz)\Big)dE_t\\
&+q(t)dB_{E_t}+\int_{\mathbb{R}}r(t,z)\tilde{N}(dE_t,dz), t<\tau,\\
p(T)=&0.
\end{aligned}
\end{equation}

Let $\frac{\partial H}{\partial u}=(-p(t)+2u(t)\exp(-\delta t))=0$, we have $u^*(t)=\frac{p(t)}{2}\exp(\delta t)$. Suppose that $p(t)=h(t)X(t)$, then $u^*(t)=\frac{h(t)X(t)}{2}\exp(\delta t)$, thus
\begin{equation}\label{adje2}
\begin{aligned}
dp(t)&=X(t)h(t)'dt+h(t)dX(t)\\
&=X(t)h(t)'dt+(-u(t)h(t))dt+h(t)X(t)\Big(bdE_t+\sigma dB_{E_t}+\theta\int_\mathbb{R}z\tilde{N}(dz,dE_t)\Big)\\
&=X(t)\Big(h(t)'-\frac{h(t)}{2}\exp(\delta t) \Big)dt+h(t)X(t)\Big(bdE_t+\sigma dB_{E_t}+\theta\int_\mathbb{R}z\tilde{N}(dz,dE_t)\Big)
\end{aligned}
\end{equation}

Comparing \eqref{adje1} and \eqref{adje2}, we derive that $h'(t)=\frac{h(t)}{2}e^{\delta t}$, equivalently, $h(t)=\exp(\frac{1}{2\delta}e^{\delta t}),$ thus
\begin{equation}
u(t)^*=\exp(\frac{1}{2\delta}e^{\delta t}+\delta t)\frac{X(t)}{2}.
\end{equation}
Moreover,
\begin{equation}
\begin{aligned}
h(t)X(t)\sigma&=q(t),\\
h(t)X(t)\theta z&=r(t,z).
\end{aligned}
\end{equation}
Some algebra implies that
\begin{equation}
\begin{aligned}
q(t)&=2\exp(-\delta t)u(t)\sigma,\\
r(t,z)&=2\exp(-\delta t)u(t)\theta z.\\
\end{aligned}
\end{equation}

 A simulation of the optimal control $u^*(t)$ with $\delta=-.001,  \sigma = 1, \theta=1,  x_0=1$, standard normal distribution $\nu$, and inverse stable subordinator $E(t)$ having $\alpha = .9$ is displayed in Figure \ref{fig:figure2home}.

\begin{figure}
\begin{center}
\caption{Simulation of $u^*(t)$ for Example 2}
\includegraphics[scale=.66]{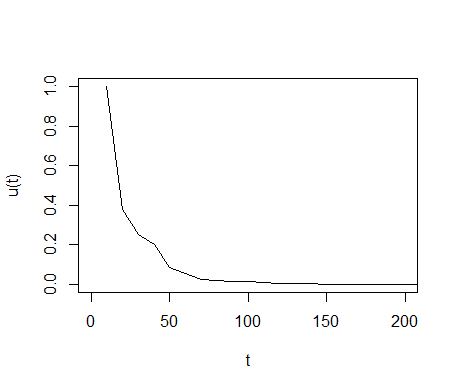}
\label{fig:figure2home}
\end{center}
\end{figure}

Because of the existence of $dt$ term in the underlying process $X(t)$, the simulated process $u^*(t)$ has no periods of constant value. Compared with $dE_t$ terms, $dt$ term plays the dominating role in the evolution of corresponding wealth $X(t)$, see \cite{erni3} for a detailed discussion. More specifically, the increasing trend $bX(t)dE_t$ is dominated by the consumption rate $-u(t)dt$. Consequently, the optional consumption rate declines as the wealth shrinks in the long term.

\end{exmp}

\newpage

\end{document}